\numberwithin{equation}{section}
\newtheorem{theorem}{Theorem}[section]
\newtheorem{lemma}{Lemma}[section]
\newtheorem{prop}{Proposition}[section]
\newtheorem{remark}{Remark}[section]
\newtheorem{example}{Example}[section]
\begin{document}
\title[Back flow of the unsteady Prandtl boundary layers]
{Back flow of the two-dimensional unsteady Prandtl boundary layer under an adverse pressure gradient}

\author{Y.-G. Wang}
\address{Ya-Guang Wang
\newline\indent
School of Mathematical Sciences, MOE-LSC and SHL-MAC, Shanghai Jiao Tong University,
Shanghai, 200240, P. R. China}
\email{ygwang@sjtu.edu.cn}

\author{S.-Y. Zhu}
\address{Shi-Yong Zhu
\newline\indent
School of Mathematical Sciences, Shanghai Jiao Tong University,
Shanghai, P. R. China}
\email{shiyong\_zhu@sjtu.edu.cn}

\date{}


\subjclass[2000]{35Q30, 76D10}

\keywords{Boundary layers, back flow, adverse pressure gradient}

\begin{abstract} In this paper, we study the back flow of the two-dimensional unsteady Prandtl boundary layer under an adverse pressure gradient. The occurrence of back flow is an important physical event in the evolution of boundary layer, which eventually leads to separation. For the two-dimensional unsteady Prandtl boundary layer equations, when the initial tangential velocity is strictly monotonic with respect to the normal variable, and the pressure gradient of the outer flow is adverse, we obtain that the first critical point of the tangential velocity profile with respect to the normal variable, if exists when the boundary layer evolves in time, must appear on the boundary. Moreover, we give a condition on the growth rate of the initial tangential velocity such that there is a back flow point of the Prandtl boundary layer under the adverse pressure gradient. In the appendix, we introduce two examples showing that back flow occurs either when the flow distance is long in the streamwise direction for a given initial monotonic tangential velocity field, or when the initial tangential velocity grows slowly in a large neighborhood of the boundary for a fixed flow distance in the streamwise direction.
\end{abstract}

\maketitle

\tableofcontents

\section{Introduction}\setcounter{section}{1}\setcounter{equation}{0}
This paper is devoted to the study of the back flow of the two-dimensional unsteady boundary layers under an adverse pressure gradient. Consider the following problem for the Prandtl boundary layer equation with non-slip boundary condition for an unsteady  incompressible flow in a domain $Q_T=\{(t,x,y)|0\le t<T,0\le x\le L, 0\le y<+\infty\}$,
\begin{equation}\label{1.1}
\begin{cases}
\partial_{t}u+u\partial_{x}u+v\partial_{y}u=\partial^2_{y}u-\partial_{x}P,\\
\partial_{x}u+\partial_{y}v=0,\\
u|_{y=0}=v|_{y=0}=0,~\lim\limits_{y\to+\infty} u=U_{e}(t,x),\\
u|_{t=0}=u_{0}(x,y),~u|_{x=0}=u_{1}(t,y),
\end{cases}
\end{equation}
 where $(u(t,x,y),v(t,x,y))$ is the velocity field in the boundary layer, $U_{e}(t,x)$ and $P(t,x)$ are traces at the boundary $\{y=0\}$ of the tangential velocity and pressure of the Euler outer flow respectively, interrelated through Bernoulli's law
\begin{equation}\label{1.2}
\partial_{t}U_{e}+U_{e}\partial_{x}U_{e}=-\partial_{x}P.
\end{equation}

The equation in \eqref{1.1} was first proposed by Prandtl (\!\!\cite{Prandtl}) to describe the behavior of
boundary layers in the small viscosity limit for
the incompressible viscous flow with non-slip boundary condition.
So far, there are some interesting results on well-posedness of problems for the Prandtl equation. In \cite{Oleinik-0}, Oleinik obtained the well-posedness of  the problem  \eqref{1.1} locally in time, for data satisfying
\begin{align}\label{1.3}
u_{0}(x,y)>0,~u_{1}(t,y)>0,~\forall t\in[0,T),x\in [0,L], y\in[0,\infty).
\end{align}
and the monotonicity assumption,
\begin{align}\label{1.4}
~\partial_{y}u_{0}(x,y)> 0,~\partial_{y}u_{1}(t,y)>0,~\forall t\in[0,T),x\in [0,L], y\in[0,\infty).
\end{align}
This result was surveyed in the monograph \cite{Oleinik}. Recently, the local well-posedness result in the monotonic class was also obtained  in the Sobolev spaces in \cite{AWXY} and \cite{MW} by using the energy method. When this monotonicity assumption does not hold for the initial data, there are some interesting results on blowup or instability of solutions to the problem \eqref{1.1} in the Sobolev spaces of solutions, cf. \cite{EE,GD,GN,GuN,KVW} and references therein. The monotonicity assumption of the tangential velocity is believed to be essential for the well-posedness of \eqref{1.1} in the two-dimensional problem, except in the spaces of analytic functions, cf. \cite{IV, KV,LCS,M,SC1, ZZ} or Gevrey functions, cf. \cite{GM,LWX, LY-1, LY-2}.

According to the phenomena observed in fluid mechanics, cf. \cite{Prandtl, S},  when the pressure gradient is favourable, i.e.
the pressure in the outer flow is decreasing in the streamwise direction,
\begin{align}\label{1.5}
\partial_{x} P(t,x)\leq 0, \qquad \forall t>0,~ x\in [0,L].
\end{align}
the Prandtl  boundary layers are expected to be stable globally in time.  Mathematically, Xin and Zhang (\!\!\cite{XZ}) obtained
a global existence of a solution to the two-dimensional Prandtl equation
for monotonic data providing that the pressure is favourable in the sense of \eqref{1.5}.
On the other hand, adverse pressure gradients may lead to the back flow phenomenon of boundary layer, which is an important physical event, eventually leading to separation of boundary layer, because an adverse pressure gradient will retard the fluid in the boundary layer, then there may exist a back flow point, behind which the flow follows the pressure gradient and moves in a direction opposite to the outer flow. The back flow point is defined as $(t_{0},x_{0},0)$ at which
\begin{equation}
\label{1.6}
\partial_{y}u(t_{0},x_{0},0)=0,
\end{equation}
and
\begin{equation}
\label{1.7}
\partial_{y}u(t,x,0)>0, \quad \forall 0<x<x_{0},~ 0<t<t_{0}.
\end{equation}
From mathematical point of view, the monotonicity condition is violated when the back flow point exists, and thus the problem \eqref{1.1} may be ill-posed in  the Sobolev space when $t$ across $t_0$.

For the two-dimensional steady flow, as pointed out in \cite{GO}, the back flow point is the point from which the boundary layer separates from the physical boundary. The first theoretical result was given by Oleinik (\!\!\cite{Oleinik-1}) and Suslov (\!\!\cite{SUS}) on existence of a separation point in the two-dimensional steady Prandtl boundary layer under an adverse pressure gradient. The asymptotic behavior of flow near the separation point of the two-dimensional steady Prandtl equations was formally investigated by Goldstein (\!\!\cite{GO}), and improved by Stewartson in \cite{Ste}.  A rigorous analysis of this asymptotic behavior of flow near separation was first studied by E and Caffarelli in an unpublished manuscript mentioned in \cite{E}, then was obtained in detail recently by Dalibard and Masmoudi in \cite{DM,DM-1}, and by Shen, Wang and Zhang in \cite{SWZ}. 

However, Moore (\!\!\cite{Moore}), Rott (\!\!\cite{Rott}) and Sears (\!\!\cite{Sears}) pointed out that the back flow point defined as in \eqref{1.6}-\eqref{1.7}, in general, is not a separation point in unsteady flows, and they concluded that separation occurs at the point of zero shear stress within the boundary layer, rather than on the surface as in the steady case, and it is a singular point of flow, see the review articles \cite{CC,ST}. Since then, many people studied singularities in boundary layers, cf. \cite{CS,GG,GS-1,GS-2,vDS} and references therein. To our knowledge, till now there does not exist any mathematical theory on back flow points of unsteady boundary layers, which certainly is important itself, and also is the first step to study the appearance of separation point. The goal of this paper is to consider the back flow for the problem \eqref{1.1} of the unsteady Prandtl equation, when the data satisfy the monotonicity assumption \eqref{1.4} but with an adverse pressure gradient.

The main result of this paper is as follows:

\begin{theorem}
(1) Assume that the trace at the boundary of the outer Euler flow satisfies
\begin{equation}\label{1.8}
U_{e}\in C^{1}([0,T]\times [0,L]) ~\mbox{and}~  U_e(t,x)>0,~\forall t\in[0,T],~x\in [0,L],
\end{equation}
and the uniformly adverse pressure gradient in the sense that
\begin{equation}\label{1.9}
\partial_{x} P(t,x)>0, \quad \forall t\in [0,T],~ x\in [0,L].
\end{equation}
Let $(u,v)$ be the local classical solution to the problem \eqref{1.1} corresponding to the data $u_{0},u_{1}$ satisfying \eqref{1.3} and the monotonicity condition \eqref{1.4}. Then, the first zero point of $\partial_{y}u(t,x,y)$, when the time evolves, should be at the boundary $\{y=0\}$ if it exists for some time $t>0$.

(2) Moreover, when the initial velocity $u_0(x,y)$ satisfies
\begin{align}\label{1.10}
\int_{0}^{\infty}\!\!\int_0^L\frac{(L-x)^{\frac{3}{2}}\partial_{y}u_{0}}{\sqrt{(\partial_{y}u_{0})^{2}+u_{0}^{2}}}dxdy\geq C_{*},
\end{align}
for a positive constant $C_{*}$ depending only on $L,T,U_{e}$ and $\partial_{x}P$,
then there is a back flow point $(t^*, x^*)\in (0, T)\times [0,L]$, such that
\begin{equation}\label{1.11}
\begin{cases}
\partial_yu(t^*, x^*, 0)=0,\\
\partial_yu(t, x, y)>0, \quad \forall 0<t<t^*, x\in [0,L], y\ge 0.
\end{cases}
\end{equation}
Moreover, we have
$\partial^2_yu(t^*, x^*, 0)\neq 0$.

\end{theorem}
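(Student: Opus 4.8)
The plan: the final claim $\partial_y^2 u(t^*,x^*,0)\neq 0$ is immediate, and the existence of a back flow point is proved by contradiction, using the Crocco transform together with part (1).

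\textbf{Step 1 (the wall identity and the reduction).} Evaluating the momentum equation in \eqref{1.1} on $\{y=0\}$, where $u=v=0$, gives $\partial_y^2 u(t,x,0)=\partial_x P(t,x)$; by \eqref{1.9} this is strictly positive everywhere, which already proves $\partial_y^2 u(t^*,x^*,0)=\partial_x P(t^*,x^*)\neq 0$ and shows that $\partial_y u$ is increasing in $y$ near the wall. Suppose, for contradiction, that $\partial_y u(t,x,0)>0$ for all $(t,x)\in(0,T)\times[0,L]$. Then no zero of $\partial_y u$ appears at the wall, hence by part (1) none appears anywhere, so $\partial_y u>0$ on $(0,T)\times[0,L]\times[0,\infty)$ with $\partial_y u\to 0$ only as $y\to+\infty$. (Here, as in part (1), the key structural fact is that $w:=\partial_y u$ solves the homogeneous equation $\partial_t w+u\,\partial_x w+v\,\partial_y w=\partial_y^{2}w$, obtained from \eqref{1.1} after the zeroth-order terms cancel.)

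\textbf{Step 2 (Crocco transform and the functional $\Phi$).} Since $\partial_y u>0$, pass to Crocco variables: take $z=u$ as normal coordinate and $w(t,x,z)=\partial_y u$, so that $0<z<U_e(t,x)$, $w>0$ inside, $w|_{z=U_e}=0$, and $\partial_y^2 u=w\,\partial_z w$; the wall identity becomes the Robin condition $(w\,\partial_z w)|_{z=0}=\partial_x P$. The $w$-equation transforms into the Crocco equation
\[
\partial_t w+z\,\partial_x w-(\partial_x P)\,\partial_z w=w^{2}\,\partial_z^{2}w .
\]
Now introduce
\[
\Phi(t)=\int_0^L\!\!\int_0^{U_e(t,x)}\frac{(L-x)^{3/2}}{\sqrt{w(t,x,z)^{2}+z^{2}}}\,dz\,dx
=\int_0^L\!\!\int_0^{\infty}\frac{(L-x)^{3/2}\,\partial_y u}{\sqrt{(\partial_y u)^{2}+u^{2}}}\,dy\,dx ,
\]
the second form coming from $dy=dz/w$, so that $\Phi(0)\geq C_*$ by \eqref{1.10}. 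As long as no back flow has occurred, $\Phi(t)$ stays finite (near $y=0$ because $\partial_y u\ge\partial_y u(t,x,0)>0$ by the wall identity, and away from $y=0$ by a Harnack lower bound for the equation in Step 1); conversely $\Phi(t)\to+\infty$ forces $\partial_y u(t,x,0)\to 0$ on a set of positive $x$-measure, i.e. a back flow point.

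\textbf{Step 3 (a differential inequality for $\Phi$).} Differentiating $\Phi$ in $t$ and using the Crocco equation, the transport term $z\,\partial_x w$ is integrated by parts in $x$ against $(L-x)^{3/2}$ (derivative $-\tfrac{3}{2}(L-x)^{1/2}$); since $\tfrac{z}{\sqrt{w^{2}+z^{2}}},\tfrac{w}{\sqrt{w^{2}+z^{2}}}\in[0,1]$, this produces only terms bounded by $L$ and $\|U_e\|_{L^\infty}$, the $x=0$ boundary term in particular being bounded \emph{independently of $u_1$}. The diffusion term $w^{2}\partial_z^{2}w$, integrated by parts in $z$, produces a boundary contribution at $z=0$ equal to $+\int_0^L(L-x)^{3/2}\,\partial_x P/w(t,x,0)\,dx$, which cancels exactly the analogous boundary contribution of the $(\partial_x P)\partial_z w$ term (both computed through the wall condition). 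The surviving bulk terms — principally $\int_0^L\!\int_0^{U_e}(L-x)^{3/2}(\partial_x P)\,z\,(w^{2}+z^{2})^{-3/2}\,dz\,dx\ (\geq0)$ and $\int_0^L\!\int_0^{U_e}(L-x)^{3/2}\,\partial_z\big(w^{3}(w^{2}+z^{2})^{-3/2}\big)\,\partial_z w\,dz\,dx$ — should then be bounded from below, via the Crocco structure and Cauchy--Schwarz, by $c_0\Phi(t)^{2}$ up to an error controlled by $L,U_e,\partial_x P$, giving
\[
\frac{d\Phi}{dt}\ \geq\ c_0\,\Phi(t)^{2}-C_1 ,\qquad c_0,C_1=c_0,C_1(L,U_e,\partial_x P).
\]

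\textbf{Step 4 (conclusion and the main obstacle).} Taking $C_*=C_*(L,T,U_e,\partial_x P)$ large enough that the solution of $\phi'=c_0\phi^{2}-C_1$, $\phi(0)=C_*$, blows up before $T$, the ODE comparison principle forces $\Phi(t)\to+\infty$ at some $t^*<T$; by Step 2 this is a back flow point $(t^*,x^*)\in(0,T)\times[0,L]$ with $\partial_y u>0$ for $0<t<t^*$, and $\partial_y^2 u(t^*,x^*,0)=\partial_x P(t^*,x^*)\neq 0$ by Step 1 — this is \eqref{1.11} together with the last assertion. The hard part will be the bulk lower bound in Step 3: after the integrations by parts and the crucial cancellation of the $1/w(t,x,0)$-singular boundary terms, the surviving integrands have near-cancelling leading behaviour as $z\to 0$, so extracting the quadratic lower bound $c_0\Phi^{2}$ while keeping $C_1$ data-independent is delicate; a secondary technical point is making the lower bounds on $\partial_y u$ (near and away from $y=0$) rigorous enough to tie finiteness of $\Phi$ precisely to the absence of a back flow point.
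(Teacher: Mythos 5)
Your outline of part (2) follows essentially the same route as the paper: Crocco variables, the weight $(L-x)^{3/2}$, the Lyapunov functional $\int (L-x)^{3/2}(w^2+\cdot^2)^{-1/2}$, the exact cancellation of the $1/w(\cdot,0)$ boundary terms between the $\partial_xP\,\partial_zw$ term and the diffusion term, and blow-up of a differential inequality. But two genuine gaps remain. First, part (1) of the theorem is used as a black box and never proved, and it is not routine. Your parenthetical suggests it follows from the homogeneous equation $\partial_tw+u\partial_xw+v\partial_yw=\partial_y^2w$ for $w=\partial_yu$; a naive minimum principle on the half-space gives nothing, because $\inf_{y\ge0}\partial_yu=0$ is always attained in the limit $y\to\infty$, so an interior zero cannot be excluded this way. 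The paper's Section~2 instead assumes an interior first critical point, passes to Crocco variables (compact $\eta$-domain $[0,1]$), builds a barrier $F=w-\epsilon\phi(\eta)e^{-M\tau}$ with a profile $\phi$ peaked at the critical height, verifies the forcing $\mathcal{F}\ge0$ (using the adverse pressure gradient near $\eta=0$ and a separate maximum-principle sup bound on $w$, Lemma~2.1, in the middle range), and then runs the maximum principle to force $w(\cdot,0)$ to vanish at an earlier time, a contradiction. None of this appears in your proposal.

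Second, Step 3 is the heart of the proof and you explicitly leave it as a conjecture (``should then be bounded from below \dots the hard part''). Three concrete computations are needed to close it, and they are exactly what you defer: (i) the good term is $\int\eta\frac{\partial_\xi P}{U_e}W^3\varphi$ with $W=(w^2+\eta^2)^{-1/2}$, and H\"older gives the cubic bound $\ge 2C_2\mathcal{G}^3$ --- not the quadratic $c_0\Phi^2$ you claim, which Cauchy--Schwarz will not produce from a $W^3$ integrand; (ii) the transport term yields $\int\eta U_eW\partial_\xi\varphi$ with $\partial_\xi\varphi<0$, and the exponent $3/2$ is chosen precisely so that $\varphi^{-1/2}|\partial_\xi\varphi|^{3/2}$ is constant, letting Young's inequality absorb this term into half of the good $W^3$ term with a remainder depending only on $U_e,\partial_\xi P,L$; (iii) after two integrations by parts the diffusion term's dangerous pieces combine through the identity $\frac{w^2}{\mathcal{W}^{3/2}}-\frac{w^4}{\mathcal{W}^{5/2}}=\frac{\eta^2w^2}{\mathcal{W}^{5/2}}\ge0$ and a further Young inequality into the harmless $-\frac34\mathcal{G}$. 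Two smaller points: the paper normalizes $\eta=u/U_e$ so the normal domain is the fixed interval $[0,1]$, whereas your unnormalized $z=u$ makes the upper limit $U_e(t,x)$ move and creates extra boundary terms under $\partial_t$ and the $x$-integration by parts; and your appeal to Harnack bounds to keep $\Phi$ finite is unnecessary, since under the contradiction hypothesis $w$ is continuous and positive on the compact closure in Crocco variables, so $(w^2+\eta^2)^{-1/2}\le 1/w$ is bounded.
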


To obtain the results stated in the above theorem, we shall deduce the first result given in Theorem 1.1 by using a contradiction argument and the maximum principle for a scalar degenerate parabolic problem derived from \eqref{1.1} by employing the Crocco transformation. Then, the existence of a back flow point will be obtained by using a Lyapunov functional argument. 

This paper is organized as follows. In Section 2, we investigate the possible location of the first critical point of the tangential velocity profile in the Prandtl boundary layer. Then, we obtain the existence of a back flow point under the assumption \eqref{1.10} in Section 3. Finally, in Section 4, we introduce examples to show that the condition \eqref{1.10} of admitting back flow point holds under the hypothesis on the largeness of the space interval $[0,L]$ or on the growth rate of the initial velocity $u_0(x,y)$ in $y$ for a fixed space interval.

\section{Position of the first critical point of the tangential velocity}

In this section, as the time evolves, we shall prove that the first possible critical point of the tangential velocity $u(t,x,y)$ with respect to $y$ should be on the boundary $\{y=0\}$.

Since the initial data given in \eqref{1.1} are strictly monotonic as given in \eqref{1.4}, there is a local classical solution to the problem \eqref{1.1} for $0\le t<T$, in the class of $\partial_yu>0$ for all $y\ge 0$, as obtained in \cite{Oleinik-0, Oleinik} and \cite{AWXY, MW}. In this monotonic class, as in \cite{Oleinik}, the following Crocco transformation is invertible,
\begin{align}\label{2.1}
\tau=t, ~\xi=x, ~\eta=\frac{u(t,x,y)}{U_{e}(t,x)},
\end{align}
and with \eqref{2.1},
\begin{align}\label{2.2}
w(\tau,\xi,\eta)=\frac{\partial_{y}u(t,x,y)}{U_{e}(t,x)},
\end{align}
satisfies the following initial boundary value problem in
$Q^{*}_T=\{(\tau,\xi,\eta)|0\le\tau<T,~\xi\in [0,L],~\eta\in[0,1]\}$,
\begin{equation}\label{2.3}
\begin{cases}
\partial_{\tau}w+\eta U_{e}\partial_{\xi}w+A\partial_{\eta}w+Bw=w^{2}\partial^2_{\eta}w,\\
w\partial_{\eta}w|_{\eta=0}=\frac{\partial_{\xi}P}{U_{e}},~w|_{\eta=1}=0,\\
w|_{\tau=0}=w_{0}:=\frac{\partial_y u_0}{U_{e}},~w|_{\xi=0}=w_{1}:=\frac{\partial_y u_1}{U_{e}},
\end{cases}
\end{equation}
where $A=(1-\eta^{2})\partial_{\xi}U_{e}+(1-\eta)\frac{\partial_{\tau}U_{e}}{U_{e}}$, and $B=\eta\partial_{\xi}U_{e}+\frac{\partial_{\tau}U_{e}}{U_{e}}$.

In this section, we shall have the following result:

\begin{prop}
Under the same assumption as given in Theorem 1.1(1), let $(u,v)$ be a local classical solution to the problem \eqref{1.1}, then the first critical point of $u(t,x,y)$ with respect to $y$, if exists, can only be at the boundary $\{y=0\}$.
\end{prop}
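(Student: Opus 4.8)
The plan is to work entirely in the Crocco variables $(\tau,\xi,\eta)$ and argue by contradiction: suppose the first critical point of $u$ in $y$, equivalently the first zero of $w$, occurs at some interior point with $\eta_0\in(0,1)$, and derive a contradiction from the equation \eqref{2.3} together with the sign condition \eqref{1.9} on $\partial_\xi P$. Recall that $w>0$ corresponds exactly to $\partial_y u>0$, that $w=0$ at $\eta=1$ always (since $u\to U_e$), and that near $\eta=0$ the Robin-type boundary condition $w\partial_\eta w|_{\eta=0}=\partial_\xi P/U_e$ holds. The first step is to set $t^*=\inf\{t: \exists (x,y),\ \partial_y u(t,x,y)=0,\ y>0\}$, translate this into Crocco variables, and observe that for $0\le \tau<t^*$ one has $w>0$ on $[0,L]\times(0,1)$, while at $\tau=t^*$ there is a point $(\xi_*,\eta_*)$ with $w(t^*,\xi_*,\eta_*)=0$ and, by minimality in time, $\eta_*\in(0,1)$ under the contradiction hypothesis.

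At such a first interior zero, $w(t^*,\cdot,\cdot)\ge 0$ attains an interior minimum value $0$, so $\partial_\eta w=0$, $\partial_\xi w=0$, $\partial^2_\eta w\ge 0$, and $\partial_\tau w\le 0$ there (the time derivative cannot be positive, else $w$ would have been negative slightly earlier). Plugging into the PDE $\partial_\tau w+\eta U_e\partial_\xi w+A\partial_\eta w+Bw=w^2\partial^2_\eta w$ at $(t^*,\xi_*,\eta_*)$: the terms $\eta U_e\partial_\xi w$, $A\partial_\eta w$, $Bw$ all vanish, and $w^2\partial^2_\eta w=0$ since $w=0$, forcing $\partial_\tau w=0$ at that point as well. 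This degeneracy is the crux: the zero-order coefficient $B$ multiplies $w$, which vanishes, so the naive maximum-principle contradiction does not close immediately, and one must exploit the boundary condition or a strong maximum principle / Hopf-type argument instead. This is the step I expect to be the main obstacle.

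To push through, I would bring in the boundary relation. The key structural fact is that the degenerate parabolic operator $L w = \partial_\tau w+\eta U_e\partial_\xi w+A\partial_\eta w+Bw-w^2\partial^2_\eta w$ has nonnegative solutions with $w>0$ on $\tau<t^*$; consider the region $\{0\le\tau\le t^*,\ 0<\eta<1\}$. Since $w\ge 0$ on this region with an interior zero at $(t^*,\xi_*,\eta_*)$, a strong maximum principle for the (linear, once $w^2$ is treated as a coefficient) parabolic operator $\partial_\tau + \eta U_e\partial_\xi + A\partial_\eta - w^2\partial^2_\eta + B$ would force $w\equiv 0$ on the connected component of $\{\tau\le t^*\}$ reachable by subcharacteristics/parabolic paths through $(\xi_*,\eta_*)$ — but this propagates the zero backward in $\tau$, contradicting $w>0$ for $\tau<t^*$, \emph{provided} the coefficient $w^2$ does not itself degenerate to $0$ at the relevant points. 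Where $w^2$ vanishes (precisely at $\eta=1$ and at the contact point), one handles the degeneracy by a barrier argument localized away from $\eta=1$: on $\eta\in[\delta,1-\delta]$ and $\tau$ slightly below $t^*$, $w$ is bounded below by a positive constant, so $w^2$ is uniformly elliptic there, the strong maximum principle applies, and the zero cannot first appear in the interior. The only remaining possibility is that the first zero occurs at $\eta=0$ (the boundary $\{y=0\}$) or at $\eta=1$; but $\eta=1$ is excluded because $w=0$ there identically and corresponds to $u=U_e$, not to a genuine first critical point of the profile (alternatively, $\eta=1$ is where $y=+\infty$, not an interior $y$). I would make the exclusion of $\eta=1$ precise by noting that $\partial_y u\to 0$ as $y\to+\infty$ is automatic and does not constitute the "first critical point" in the sense of the statement — the statement concerns finite $y$, i.e. $\eta<1$ — so the only admissible location is $\eta=0$, which is the boundary $\{y=0\}$, completing the proof. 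The adverse pressure gradient \eqref{1.9} enters precisely to guarantee that the $\eta=0$ boundary condition $w\partial_\eta w|_{\eta=0}=\partial_\xi P/U_e>0$ is compatible with $w$ reaching $0$ from above there (so the boundary is genuinely the place where monotonicity can first fail), whereas under a favourable gradient this cannot happen — though for Proposition 2.1 itself we only need the exclusion of interior zeros.
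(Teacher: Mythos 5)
Your setup (contradiction argument, passage to Crocco variables, first zero of $w$ at an interior $\eta_*\in(0,1)$) matches the paper, and you correctly locate the crux: at the contact point the diffusion coefficient $w^2$ vanishes together with $w$, so every term in the equation vanishes there and the naive first/second-derivative test gives $0\le 0$. But the resolution you propose does not close this gap. The strong maximum principle requires uniform parabolicity in a \emph{neighborhood of the point where the minimum is attained}, and that point is precisely where $w^2=0$; restricting to $\eta\in[\delta,1-\delta]$ and $\tau<t^*$ gives you a positive lower bound on $w$ only on a region containing no contact point, and that lower bound degenerates as $\tau\to t^*$, so there is nothing to apply the principle to. Worse, for equations whose diffusion coefficient vanishes with the solution (porous-medium--type degeneracy, which is exactly the structure of $\partial_\tau w=w^2\partial_\eta^2w+\cdots$), the strong maximum principle is false in general: nonnegative solutions can have interior zero sets that do not propagate. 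So the principle cannot be invoked off the shelf, and the ``barrier argument'' you defer to is in fact the entire content of the proof, which you have not constructed.

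The paper's proof supplies exactly this missing barrier: it compares $w$ with $\epsilon\phi(\eta)e^{-M\tau}$, where $\phi$ vanishes linearly at $\eta=0$ and $\eta=1$ and equals $1$ at $\eta_0$, and shows $F=w-\epsilon\phi e^{-M\tau}$ satisfies a differential inequality with nonnegative source $\mathcal F$. The verification of $\mathcal F\ge0$ on $[0,\eta_1)$ reduces, via Bernoulli's law, to $-A\partial_\eta\phi-B\phi=\tfrac{1}{2\eta_1}\tfrac{\partial_\xi P}{U_e}>0$, and the a priori bound $\sup w^2\le C_0$ of Lemma 2.1 (itself proved using $\partial_\eta(w^2)|_{\eta=0}=2\partial_\xi P/U_e>0$) is needed on $[\eta_1,\eta_2]$. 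Both steps use the adverse pressure gradient \eqref{1.9} in an essential way, so your closing claim that \eqref{1.9} is not needed for the exclusion of interior zeros is at odds with the actual mechanism: the hypothesis is what makes the barrier a subsolution near $\eta=0$ and what forces the resulting negative minimum of $e^{-N\tau}F$ onto $\{\eta=0\}$, yielding a zero of $w(\cdot,\cdot,0)$ at an earlier time and hence the contradiction with firstness. You would need either to reproduce such a comparison function or to find a genuinely different way to handle the solution-dependent degeneracy; as written, the proof has a gap at its central step.
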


To prove this proposition, we first have the following lemma for the problem \eqref{2.3}.

\begin{lemma}
Let $w$ be a local classical solution to the problem \eqref{2.3} for $0\le \tau<T$. Under the same assumption as given in Proposition 2.1, there is a constant $C_{0}$ depending only on $U_{e}$ and $T$, such that we have
\begin{equation}\label{2.3-1}
\sup\limits_{Q^*_T}w^{2}(\tau, \xi, \eta)\leq C_{0}\max\left(\sup_{0\leq \xi\leq L, 0\leq \eta\leq 1}w_{0}^{2}(\xi, \eta), \sup_{0\leq \tau<T, 0\leq \eta \leq 1}w_{1}^{2}(\tau, \eta)\right).
\end{equation}
\end{lemma}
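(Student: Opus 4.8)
The plan is to establish a pointwise bound on $w$ by a maximum-principle argument applied to a suitably rescaled version of $w$. The equation in \eqref{2.3} is a degenerate parabolic equation for $w$ (degenerate where $w=0$, i.e. at $\eta=1$), but the zeroth-order term $Bw$ has a coefficient $B=\eta\partial_\xi U_e+\partial_\tau U_e/U_e$ that need not have a sign, so one cannot apply the maximum principle to $w$ directly. First I would introduce an exponential weight in time: set $\tilde w(\tau,\xi,\eta)=e^{-\lambda\tau}w(\tau,\xi,\eta)$ for a constant $\lambda$ to be chosen. Since \eqref{1.8} gives $U_e\in C^1$ with $U_e>0$ on the compact set $[0,T]\times[0,L]$, the coefficients $A$, $B$, and $\eta U_e$ are all bounded there; in particular $|B|\le C(U_e,T)$. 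After this substitution $\tilde w$ satisfies
\begin{equation}\label{2.3-2}
\partial_\tau\tilde w+\eta U_e\partial_\xi\tilde w+A\partial_\eta\tilde w+(B+\lambda)\tilde w=e^{\lambda\tau}w^2\partial_\eta^2\tilde w,
\end{equation}
and choosing $\lambda$ large enough that $B+\lambda\ge 1>0$ on $Q_T^*$ makes the zeroth-order coefficient positive.

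Next I would run the maximum principle on $\tilde w^2$, or more precisely compare $\tilde w$ to the constant $M:=\max\big(\sup w_0^2,\sup w_1^2\big)^{1/2}$ (times an appropriate factor). The key point is to check the sign of the relevant quantity on the parabolic boundary of $Q_T^*$. At $\tau=0$ we have $\tilde w=w_0$, and at $\xi=0$ we have $\tilde w=e^{-\lambda\tau}w_1$, both controlled by $M$. At $\eta=1$, $w|_{\eta=1}=0$ so $\tilde w=0$ there. The subtle boundary is $\eta=0$: there the equation degenerates in the $\partial_\eta^2$ term (its coefficient is $e^{\lambda\tau}w^2$, and $w(\tau,\xi,0)$ need not vanish), and the boundary condition is the nonlinear Neumann-type relation $w\partial_\eta w|_{\eta=0}=\partial_\xi P/U_e$. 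Because $\partial_\xi P>0$ by \eqref{1.9} and $w>0$ in the monotonic class, this forces $\partial_\eta w|_{\eta=0}>0$, i.e. $w$ is increasing in $\eta$ near the boundary $\eta=0$ — so an interior maximum of $\tilde w$ (or $\tilde w^2$) in $\eta$ cannot sit at $\eta=0$. This is exactly the place where the adverse-pressure-gradient hypothesis enters, and it is what prevents the maximum from escaping through $\{\eta=0\}$.

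So the argument structure is: suppose $\sup_{Q_T^*}\tilde w^2$ is attained at some point $(\tau_0,\xi_0,\eta_0)$; if $(\tau_0,\xi_0,\eta_0)$ lies on the parabolic boundary $\{\tau=0\}\cup\{\xi=0\}\cup\{\eta=1\}$ we are done with constant $C_0=e^{2\lambda T}$ (absorbing the $e^{-\lambda\tau}$ the other way). At $\{\eta=0\}$ the sign of $\partial_\eta w$ from the boundary condition rules it out. Finally, at an interior point, standard first- and second-derivative tests ($\partial_\tau\tilde w\ge 0$ if $\tau_0=T^-$ else $=0$, $\partial_\xi\tilde w=\partial_\eta\tilde w=0$, $\partial_\eta^2\tilde w\le 0$) applied to \eqref{2.3-2} give $(B+\lambda)\tilde w=e^{\lambda\tau_0}w^2\partial_\eta^2\tilde w\le 0$ at that point; since $B+\lambda>0$ this forces $\tilde w(\tau_0,\xi_0,\eta_0)\le 0$, contradicting that it is a positive maximum (and if $\tilde w\equiv 0$ the bound is trivial). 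Hence the maximum is on the parabolic boundary and \eqref{2.3-1} follows with $C_0=e^{2\lambda T}$, which depends only on $U_e$ and $T$ through $\lambda$.

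The main obstacle I anticipate is the careful treatment of the degenerate boundaries $\{\eta=0\}$ and $\{\eta=1\}$: one must argue that the (possibly only one-sided) derivative tests are legitimate there, handle the case of a maximum attained in the closure via a perturbation of the test function (e.g. replacing $\tilde w$ by $\tilde w-\varepsilon\tau$ or adding a term like $\varepsilon/(1-\eta)$ or $\varepsilon(1+\xi)$ and letting $\varepsilon\to 0$) to rule out boundary-contact subtleties, and confirm that classical regularity of $w$ up to $\{\eta=0\}$ suffices to evaluate the nonlinear boundary condition. The interior computation is routine once the weight $\lambda$ is fixed.
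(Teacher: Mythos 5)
Your proposal is correct and follows essentially the same route as the paper: an exponential-in-time weight to make the zeroth-order coefficient nonnegative, the maximum principle to push the extremum to the parabolic boundary, and the adverse-pressure-gradient sign of the Neumann data to exclude $\{\eta=0\}$. The only cosmetic difference is that the paper applies this to $f=e^{-N\tau}w^2$ rather than to $e^{-\lambda\tau}w$, which makes the boundary condition at $\eta=0$ linear ($\partial_\eta f=2e^{-N\tau}\partial_\xi P/U_e>0$) and produces an extra good-signed term $-2e^{-N\tau}w^2(\partial_\eta w)^2$ on the right-hand side; your version needs $w>0$ at $\eta=0$ to read off $\partial_\eta w>0$, which is available in the monotone class, so both work.
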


\begin{proof}
For the solution $w$ of the problem \eqref{2.3}, set $f(\tau,\xi,\eta):=e^{-N\tau}w^{2}$
with $N$ satisfying $N+2B\geq0$. Then, from \eqref{2.3}, we know that the function $f(\tau,\xi,\eta)$ satisfies
the following problem  in $Q^{*}_T$,
\begin{equation}\label{2.4}
\begin{cases}
\partial_{\tau}f+\eta U_{e}\partial_{\xi}f+A\partial_{\eta}f+(N+2B)f
=w^{2}\partial^2_{\eta}f-2e^{-N\tau}w^{2}(\partial_{\eta}w)^{2},\\
\partial_{\eta}f|_{\eta=0}=\frac{2\partial_{\xi}P}{U_{e}}e^{-N\tau},~f|_{\eta=1}=0,\\
f|_{\tau=0}=w_{0}^{2},~f|_{\xi=0}=w_{1}^{2}e^{-N\tau}.
\end{cases}
\end{equation}
From the adverse pressure gradient condition \eqref{1.9}, we know that $\partial_\eta f$ is positive at $\{\eta=0\}$, thus $f$ can not attain its supremum in $Q^*_T$ at $\{\eta=0\}$. Applying the maximum principle of parabolic equations for the problem \eqref{2.4}, we deduce that $f$ can attain its supremum in $Q^*_T$ at $\{\tau=0\}$ or $\{\xi=0\}$, from which we get the conclusion \eqref{2.3-1} immediately.
\end{proof}

\begin{proof}[Proof of Proposition 2.1]
We prove this proposition by a contradiction argument. Assume the first critical point of $u$ with respect to $y$ is an inner point $(t_{0},x_{0},y_{0})$ with $0<t_{0}< T,~0\le x_{0}\leq L,~0<y_{0}<\infty$. Then for any $0\le t<t_{0}$, the Crocco transformation given in \eqref{2.1} is invertible.
For a sufficiently small $\epsilon$, by continuity of $w$ in $\tau$, there must exist a point $\frac{t_0}{2}<\tau_{\epsilon}<t_{0}$ near $t_{0}$ such that
\begin{equation}\label{2.5-0}
0<w(\tau_{\epsilon},\xi_{0},\eta_{0})< \epsilon^{2},
\end{equation}
where $\xi_{0}=x_{0}$ and $\eta_{0}=\frac{u(\tau_{\epsilon},\xi_{0},y_{0})}{U_{e}(\tau_{\epsilon},\xi_{0})}$.
Moreover, we have
\begin{align}\label{2.5}
w(\tau,\xi,\eta)>0 \quad \mbox{in}~\{(\tau,\xi,\eta)|0\leq\tau\leq \tau_{\epsilon},~0\leq\xi\leq \xi_{0},~0\leq\eta<1\}.
\end{align}

(1) Consider a function $F(\tau,\xi,\eta)$ defined in
$$D_\epsilon=\{(\tau,\xi,\eta)|0\leq\tau\leq \tau_{\epsilon},~0\leq\xi\leq \xi_{0},~0\leq\eta\leq1\}$$
as
\begin{align*}
F(\tau,\xi,\eta):=w(\tau,\xi,\eta)-\epsilon\phi(\eta)e^{-M\tau},
\end{align*}
where parameters $M$ and $\epsilon$ will be determined later, $\phi\in C^{2}([0,1])$ satisfies
\begin{align*}
\phi(\eta)=
\begin{cases}
\frac{1}{2\eta_{1}}\eta, ~&\eta\in [0,\eta_{1}],\\
1,~&\eta=\eta_{0},\\
1-\eta,~&\eta\in [\eta_{2}, 1],
\end{cases}
\end{align*}
for some $\eta_{1}, \eta_{2}\in(0,1)$ satisfying $\eta_{1}<\eta_{0}<\eta_{2}$,
moreover, $\partial_{\eta}\phi\geq0$ for any $\eta\in[0,\eta_{0}]$, and
$\partial_{\eta}\phi\leq0$ for any $\eta\in[\eta_{0},1]$.

From \eqref{2.3}, we know that $F$ satisfies the following problem
in $D_\epsilon$,
\begin{equation}\label{2.5-1}
\begin{cases}
\partial_{\tau}F+\eta U_{e}\partial_{\xi}F+A\partial_{\eta}F+BF
=w^{2}\partial^2_{\eta}F+\epsilon\mathcal{F},\\
F|_{\eta=0}=w(\tau,\xi,0),~
F|_{\eta=1}=0,~\\
F|_{\tau=0}=w_{0}-\epsilon\phi(\eta),F|_{\xi=0}=w_{1}-\epsilon\phi(\eta)e^{-M\tau},
\end{cases}
\end{equation}
where
\begin{align*}
\mathcal{F}
=M\phi(\eta)e^{-M\tau}-A\partial_{\eta}\phi(\eta)e^{-M\tau}-B\phi(\eta)e^{-M\tau}
+w^{2}\partial^2_{\eta}\phi(\eta)e^{-M\tau}.
\end{align*}

(2) We claim that $\mathcal{F}\geq0$ in $D_\epsilon$ when $M$ is properly large. To this end, divide the interval $[0,1]$ of $\eta$ into three parts, $[0,1]=[0,\eta_{1})\cup[\eta_{1},\eta_{2}]\cup(\eta_{2},1]$ and study $\mathcal{F}$ in these subintervals respectively.

i) When $\eta\in[0,\eta_{1})$, from the definition of $\phi$ we know
\begin{align*}
-A\partial_{\eta}\phi(\eta)e^{-M\tau}-B\phi(\eta)e^{-M\tau}
=&-\frac{1}{2\eta_{1}}\left[\partial_{\xi}U_{e}+\frac{\partial_{\tau}U_{e}}{U_{e}}\right]e^{-M\tau}\\
=&\frac{1}{2\eta_{1}}\frac{\partial_{\xi}P}{U_{e}}e^{-M\tau}>0,
\end{align*}
by using the assumption \eqref{1.9}.

Noticing that $\partial^2_{\eta}\phi(\eta)=0$ on $[0,\eta_{1})$, we  have
\begin{align*}
\mathcal{F}(\tau, \xi, \eta)
>M\phi(\eta)e^{-M\tau}\geq 0, \quad \forall \eta\in[0,\eta_1)
\end{align*}
for any non-negative $M$.

ii) When $\eta\in[\eta_{1},\eta_{2}]$, by using Lemma 2.1 we know that $A\partial_{\eta}\phi(\eta)+B\phi(\eta)
-w^{2}\partial^2_{\eta}\phi(\eta)$ is bounded. Since $\phi(\eta)\geq\min\{\frac{1}{2},1-\eta_{2}\}$ on $[\eta_{1},\eta_{2}]$, we can choose $M$ large enough such that
\begin{align*}
M\phi(\eta)-A\partial_{\eta}\phi(\eta)-B\phi(\eta)+w^{2}
\partial^2_{\eta}\phi(\eta)\ge 0,
\end{align*}
which implies that $\mathcal{F}(\tau,\xi,\eta)\ge 0$ in $[\eta_1,\eta_2]$.

iii) Noticing that $A=(1-\eta)\left[(1+\eta)\partial_{\xi}U_{e}+\frac{\partial_{\tau}U_{e}}{U_{e}}\right]$, for $\eta\in(\eta_{2},1]$ we have
\begin{align*}
A\partial_{\eta}\phi=-\left[(1+\eta)\partial_{\xi}U_{e}+\frac{\partial_{\tau}U_{e}}{U_{e}}\right]\phi.
\end{align*}
Moreover, $\partial^2_{\eta}\phi(\eta)=0$ when $\eta\in(\eta_{2},1]$. Thus we have
\begin{align*}
\mathcal{F}
=\left[M+(1+\eta)\partial_{\xi}U_{e}+\frac{\partial_{\tau}U_{e}}{U_{e}}-B\right]\phi(\eta)e^{-M\tau}
=\left(M+\partial_{\xi}U_{e}\right)\phi(\eta)e^{-M\tau}
\end{align*}
which implies $\mathcal{F}\geq0$ on $(\eta_2, 1]$, by choosing $M$ properly large such that $M+\partial_{\xi}U_{e}\geq0$. From now on, we fix $M$ large such that $\mathcal{F}(\tau,\xi,\eta)\geq0$ in the whole $D_{\epsilon}$.

(3) To apply for the maximum principle for the problem \eqref{2.5-1},
choose another constant $N$ properly large such that $N+B\geq0$.
Denote by $G(\tau,\xi,\eta):=e^{-N\tau}F(\tau,\xi,\eta)$. From \eqref{2.5-1}, we know that $G(\tau,\xi,\eta)$
satisfies the following problem in $D_\epsilon$,
\begin{equation}\label{2.6}
\begin{cases}
\partial_{\tau}G+\eta U_{e}\partial_{\xi}G+A\partial_{\eta}G+(N+B)G
=w^{2}\partial^2_{\eta}G+\epsilon\mathcal{F}e^{-N\tau},\\
G|_{\eta=0}=w(\tau,\xi,0)e^{-N\tau},~
G|_{\eta=1}=0,~\\
G|_{\tau=0}=w_{0}-\epsilon\phi(\eta),G|_{\xi=0}=\left[w_{1}-\epsilon\phi(\eta)e^{-M\tau}\right]e^{-N\tau},
\end{cases}
\end{equation}

Choose $\epsilon_1>0$ small enough, such that when $0<\epsilon\le \epsilon_1$,
\begin{equation}\label{2.7}
  G(\tau_{\epsilon},\xi_{0},\eta_{0})<\left(\epsilon^{2}-\epsilon e^{-M\tau_{\epsilon}}\right)e^{-N\tau_{\epsilon}}<0,
\end{equation}
and
\begin{equation}\label{2.7-1}
  1-e^{-(M+N)\tau_\epsilon}+\epsilon e^{-N\tau_\epsilon}<1
\end{equation}
hold.
Set $\tilde{\eta}=\max(\eta_2, 1-e^{-(M+N)\tau_\epsilon}+\epsilon e^{-N\tau_\epsilon})$, and let $0<\epsilon_2\le \epsilon_1$ be small
such that when $0<\epsilon\le \epsilon_2$, we have
\begin{align}\label{2.8}
  \epsilon\leq\min\limits_{\xi\in[0,\xi_{0}],\eta\in(0,\tilde{\eta}]}\frac{w_{0}(\xi,\eta)-\epsilon^{2}e^{-N\tau_{\epsilon}}}{\phi(\eta)},\quad
  \epsilon\leq\min\limits_{\tau\in[0,\tau_{\epsilon}],\eta\in(0,\tilde{\eta}]}\frac{w_{1}(\tau,\eta)-\epsilon^{2}e^{-N\tau_{\epsilon}}}{\phi(\eta) e^{-M\tau}},
\end{align}

From \eqref{2.7} and \eqref{2.8}, we get immediately that
\begin{align}\label{2.9}
\min_{\xi\in[0,\xi_{0}],\eta\in[0,\tilde{\eta}]}G(0,\xi,\eta)\ge \epsilon^2e^{-N\tau_\epsilon}>G(\tau_{\epsilon},\xi_{0},\eta_{0}),
\end{align}
and
\begin{align}\label{2.10}
\min_{\tau\in[0,\tau_{\epsilon}],\eta\in[0,\tilde{\eta}]}G(\tau,0,\eta)\ge \epsilon^2e^{-N\tau_\epsilon}>G(\tau_{\epsilon},\xi_{0},\eta_{0}).
\end{align}

On the other hand, when $\eta\in [\tilde{\eta}, 1]$, obviously from \eqref{2.7-1}, we have
\begin{align}\label{2.9-1}
w_0(\xi,\eta)-\epsilon\phi(\eta)
\ge \epsilon(\tilde{\eta}-1)\geq(\epsilon^2-\epsilon e^{-M\tau_\epsilon})e^{-N\tau_\epsilon}>G(\tau_{\epsilon},\xi_{0},\eta_{0}),
\end{align}
and
\begin{align}\label{2.10-1}
(w_1(\tau,\eta)-\epsilon\phi(\eta)e^{-M\tau})e^{-N\tau}
\ge \epsilon(\tilde{\eta}-1)\geq(\epsilon^2-\epsilon e^{-M\tau_\epsilon})e^{-N\tau_\epsilon}>G(\tau_{\epsilon},\xi_{0},\eta_{0}),
\end{align}
when $0<\epsilon\le \epsilon_2$.

Combining \eqref{2.9}, \eqref{2.10} with \eqref{2.9-1}, \eqref{2.10-1} respectively, it follows that
\begin{align}\label{2.15}
\min_{\xi\in[0,\xi_{0}],\eta\in[0,1]}G(0,\xi,\eta)>G(\tau_{\epsilon},\xi_{0},\eta_{0}),
\end{align}
and
\begin{align}\label{2.16}
\min_{\tau\in[0,\tau_{\epsilon}],\eta\in[0,1]}G(\tau,0,\eta)>G(\tau_{\epsilon},\xi_{0},\eta_{0}),
\end{align}
hold.

Applying the maximum principle in the problem \eqref{2.6}, and by using \eqref{2.15}-\eqref{2.16} it follows that
\begin{align*}
\min_{\tau\in[0,\tau_{\epsilon}],\xi\in[0,\xi_{0}]}w(\tau,\xi,0)e^{-N\tau}\leq G(\tau_{\epsilon},\xi_{0},\eta_{0})<0.
\end{align*}
Since $\min\limits_{\xi\in[0,\xi_{0}]}w(0,\xi,0)>0$, there must exist a point $(\tau_{*},\xi_{*})\in(0,\tau_{\epsilon})\times(0,\xi_{0}]$, such that
$w(\tau_{*},\xi_{*},0)=0$. This is in contradiction with \eqref{2.5}. Therefore, we conclude the assertion given in Proposition 2.1.

\end{proof}

\section{Existence of a back flow point}

In this section, under the assumptions given in Theorem 1.1(2), we shall prove there exists a critical point of the tangential velocity $u(\cdot, y)$ of \eqref{1.1} with respect to $y$ at the boundary $\{y=0\}$. It will be obtained by a contradiction approach. From Proposition 2.1, we know that under the monotonicity condition \eqref{1.4} and the adverse pressure gradient assumption \eqref{1.9}, the first zero point of $\partial_yu(\cdot,y)$, if exists, could not be an interior of $0\le y<+\infty$. From now on, we assume that $\partial_yu(t,x,y)$ is positive everywhere in the domain
$$Q_T=\{(t,x,y)|0\le t<T, 0\le x\le L, 0\le y<+\infty\},$$
for the problem \eqref{1.1} under the assumptions \eqref{1.4} and \eqref{1.9}. Then, the Crocco transformation \eqref{2.1} is invertible in $Q_T$, and $w(\tau, \xi, \eta)=\frac{\partial_yu}{U_{e}}>0$ in
$$Q^*_T=\{(\tau,\xi,\eta)|0\le \tau<T, 0\le \xi\le L, 0\le \eta<1\}.$$

Denote by $W(\tau,\xi,\eta)=\mathcal{W}^{-\frac{1}{2}}(\tau,\xi,\eta)$, with
$\mathcal{W}(\tau,\xi,\eta)=w^{2}+\eta^{2}$. From \eqref{2.3}, we know that $W>0$ satisfies the following problem in $Q^{*}_T$:
\begin{equation}\label{3.1}
\begin{cases}
\partial_{\tau}W+\eta U_{e}\partial_{\xi}W+A\partial_{\eta}W
=BW
-\frac{w^{3}}{\mathcal{W}^{\frac{3}{2}}}\partial^2_{\eta}w
+\eta\frac{\partial_{\xi}P}{U_{e}}W^{3},\\
(\partial_{\eta}W+\frac{\partial_{\xi}P}{U_{e}}W^{3})|_{\eta=0}=0,~
W|_{\eta=1}=1,\\
W|_{\tau=0}=(w_{0}^{2}+\eta^{2})^{-\frac{1}{2}},W|_{\xi=0}=(w_{1}^{2}+\eta^{2})^{-\frac{1}{2}}.
\end{cases}
\end{equation}

For the problem \eqref{3.1}, we have
\begin{prop}
Under the same assumptions as given in Theorem 1.1(2), there exists a point $(\tau^{*},\xi^{*})\in (0,T)\times [0,L]$, such that \begin{equation}\label{3.2}
W(\tau^{*},\xi^{*},0)=\infty.
\end{equation}
\end{prop}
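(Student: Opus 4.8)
The plan is to reformulate the claim as the existence of a back flow point and then to rule out its absence by a Lyapunov functional argument. Since $W(\tau,\xi,0)=(w^{2}+\eta^{2})^{-1/2}\big|_{\eta=0}=1/w(\tau,\xi,0)$, assertion \eqref{3.2} is equivalent to the existence of $(\tau^{*},\xi^{*})$ with $w(\tau^{*},\xi^{*},0)=0$. I argue by contradiction: suppose $w(\tau,\xi,0)>0$ for every $(\tau,\xi)\in[0,T)\times[0,L]$, so that $W$ is a classical solution of \eqref{3.1} that is finite on $Q^{*}_{T}$, in particular at $\{\eta=0\}$, and thus $\mathcal{L}(\tau)<\infty$ for every $\tau\in[0,T)$, where
\[
\mathcal{L}(\tau):=\int_{0}^{L}\!\!\int_{0}^{1}(L-\xi)^{3/2}\,W(\tau,\xi,\eta)\,d\eta\,d\xi .
\]
Under the Crocco change of variables \eqref{2.1} one has $d\eta=w\,dy$, hence $\dfrac{\partial_{y}u}{\sqrt{(\partial_{y}u)^{2}+u^{2}}}\,dy=\dfrac{d\eta}{\sqrt{w^{2}+\eta^{2}}}=W\,d\eta$, so $\mathcal{L}(0)$ equals the integral on the left of \eqref{1.10} and the hypothesis \eqref{1.10} reads $\mathcal{L}(0)\ge C_{*}$.

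Next I differentiate $\mathcal{L}$ in $\tau$, insert \eqref{3.1}, and integrate by parts in $\xi$ and in $\eta$. The weight $(L-\xi)^{3/2}$ is chosen so that the transport term $\eta U_{e}\partial_{\xi}W$ produces no boundary contribution at $\xi=L$, only a bounded data term at $\xi=0$, and an interior remainder governed by $\partial_{\xi}\!\big[(L-\xi)^{3/2}\eta U_{e}\big]$, which is harmless because $\int_{0}^{1}\eta W\,d\eta=\int_{0}^{1}\eta(w^{2}+\eta^{2})^{-1/2}\,d\eta\le 1$. In the $\eta$-integration the boundary $\{\eta=1\}$ is inoffensive ($W|_{\eta=1}=1$, $A|_{\eta=1}=0$); the delicate point is $\{\eta=0\}$. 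There the term $A\partial_{\eta}W$, after integration by parts and using $A|_{\eta=0}=\partial_{\xi}U_{e}+\partial_{\tau}U_{e}/U_{e}=-\partial_{\xi}P/U_{e}$ (from Bernoulli's law \eqref{1.2}), contributes $-\int_{0}^{L}(L-\xi)^{3/2}\dfrac{\partial_{\xi}P}{U_{e}\,w(\tau,\xi,0)}\,d\xi$, which is singular as back flow is approached. I claim this is cancelled exactly by the boundary contribution of the degenerate diffusion: using $\partial_{\eta}W=-W^{3}(w\partial_{\eta}w+\eta)$ one obtains the identity
\[
-\frac{w^{3}}{\mathcal{W}^{3/2}}\partial^{2}_{\eta}w=w^{2}\partial^{2}_{\eta}W+W^{-1}(\eta^{2}-2w^{2})(\partial_{\eta}W)^{2}+2\eta\,\partial_{\eta}W+W ,
\]
and integrating the term $w^{2}\partial^{2}_{\eta}W$ by parts, while using the Robin condition $\partial_{\eta}W|_{\eta=0}=-\frac{\partial_{\xi}P}{U_{e}}W^{3}|_{\eta=0}$ and $w^{2}W^{3}|_{\eta=0}=1/w(\tau,\xi,0)$, produces the boundary term $+\int_{0}^{L}(L-\xi)^{3/2}\dfrac{\partial_{\xi}P}{U_{e}\,w(\tau,\xi,0)}\,d\xi$, which annihilates the previous one. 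The leftover bulk terms recombine, again via $\partial_{\eta}W=-W^{3}(w\partial_{\eta}w+\eta)$, into the nonnegative quantity $3\int_{0}^{L}\!\int_{0}^{1}(L-\xi)^{3/2}\eta^{2}W^{-1}(\partial_{\eta}W)^{2}\,d\eta\,d\xi$, which I discard, plus terms that are $O(\mathcal{L})+O(1)$ thanks to the boundedness of $A$, $B$ and $\partial_{\eta}A$. Carrying out this cancellation rigorously, controlling every near-$\{\eta=0\}$ contribution (some a priori of size $w(\tau,\xi,0)^{-3}$), and justifying the integrations by parts for the classical --- but possibly degenerating --- solution $w$, is the step I expect to be the main obstacle.

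After these reductions one is left with
\[
\frac{d\mathcal{L}}{d\tau}\ \ge\ \int_{0}^{L}\!\!\int_{0}^{1}(L-\xi)^{3/2}\,\eta\,\frac{\partial_{\xi}P}{U_{e}}\,W^{3}\,d\eta\,d\xi\ -\ C_{1}\mathcal{L}\ -\ C_{2},
\]
with $C_{1},C_{2}$ depending only on $L$, $T$, $\|U_{e}\|_{C^{1}}$ and $\|\partial_{x}P\|_{L^{\infty}}$. The cubic term forces the blow-up: pointwise in $\xi$, H\"older's inequality with exponents $3$ and $3/2$, applied to $\eta^{1/3}W$ and $\eta^{-1/3}$ (with $\int_{0}^{1}\eta^{-1/2}d\eta=2$), gives $\int_{0}^{1}\eta W^{3}\,d\eta\ge\tfrac14\big(\int_{0}^{1}W\,d\eta\big)^{3}$, and then Jensen's inequality for the probability measure $(L-\xi)^{3/2}d\xi\big/\!\int_{0}^{L}(L-\xi)^{3/2}d\xi$ gives $\int_{0}^{L}(L-\xi)^{3/2}\big(\int_{0}^{1}W\,d\eta\big)^{3}d\xi\ge c\,L^{-5}\mathcal{L}^{3}$; since $\partial_{\xi}P/U_{e}$ is bounded below by a positive constant this yields
\[
\frac{d\mathcal{L}}{d\tau}\ \ge\ c_{3}\,L^{-5}\,\mathcal{L}^{3}\ -\ C_{1}\mathcal{L}\ -\ C_{2}.
\]
Finally, I choose $C_{*}$ large --- depending only on $L$, $T$, $U_{e}$ and $\partial_{x}P$ --- so that the solution of the comparison ODE $y'=c_{3}L^{-5}y^{3}-C_{1}y-C_{2}$ with $y(0)=C_{*}$ blows up at some $T_{*}<T$. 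Since $\mathcal{L}(0)\ge C_{*}$ and $\mathcal{L}(\tau)<\infty$ on $[0,T)$, the comparison principle forces $\mathcal{L}(\tau)\to\infty$ as $\tau\uparrow T_{*}<T$, a contradiction. Hence $w(\tau^{*},\xi^{*},0)=0$, i.e.\ $W(\tau^{*},\xi^{*},0)=\infty$, for some $(\tau^{*},\xi^{*})\in(0,T)\times[0,L]$, which is \eqref{3.2}.
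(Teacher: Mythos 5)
Your proposal is correct and follows essentially the same route as the paper: the identical Lyapunov functional $\int_0^L\!\int_0^1 (L-\xi)^{3/2}W\,d\eta\,d\xi$, the same cancellation at $\{\eta=0\}$ between the singular boundary term produced by $A\partial_\eta W$ (via Bernoulli's law) and the one produced by the degenerate diffusion term (via the Robin condition), the same H\"older-type lower bound yielding a cubic Riccati inequality, and the same ODE blow-up conclusion. The only differences are cosmetic bookkeeping (the paper integrates the diffusion term by parts directly in $w$ rather than through your identity for $w^2\partial_\eta^2 W$, and absorbs the $\partial_\xi\varphi$ transport term by Young's inequality into half of the cubic term rather than bounding $\int_0^1\eta W\,d\eta\le 1$).
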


Obviously, from \eqref{3.2} we have $w(\tau^{*},\xi^{*},0)=0$, which is in contradiction with $w>0$ in $Q^{*}_{T}$.
We shall prove this proposition by constructing a Lyapunov functional and concluding that this functional blows up within the time interval $(0,T)$, provided that the initial data of $W$ is suitable large.

\begin{proof}[Proof of Proposition 3.1]

Let $\varphi(\xi)=(L-\xi)^{\frac{3}{2}}$, denote by
\begin{align*}
\mathcal{G}(\tau)=\int_{\Omega}W(\tau,\xi,\eta)\varphi(\xi)d\xi d\eta,
\end{align*}
where $\Omega=[0,L]_\xi\times[0,1)_\eta.$

From \eqref{3.1}, we know
\begin{align}\label{3.3}
\frac{d}{d\tau}\mathcal{G}
=&-\int_{\Omega}\eta U_{e}\partial_{\xi}W\varphi d\xi d\eta
-\int_{\Omega}(A\partial_{\eta}W-BW)\varphi d\xi d\eta
+\int_{\Omega}\eta\frac{\partial_{\xi}P}{U_{e}}W^{3}\varphi d\xi d\eta\\
&-\int_{\Omega}\frac{w^{3}}{\mathcal{W}^{\frac{3}{2}}}\partial^2_{\eta}w\varphi d\xi d\eta\nonumber\\
=&\sum_{i=1}^{4}\mathcal{R}_{i},\nonumber
\end{align}
with obvious notations $\mathcal{R}_{i}(1\leq i\leq 4)$. Now we shall estimate each $\mathcal{R}_{i}$ step by step.

i) By using integration by parts, we have
\begin{align}\label{3.4}
\mathcal{R}_{1}
=&\int_{\Omega}\eta \partial_{\xi}U_{e}W\varphi d\xi d\eta
+\int_{\Omega}\eta U_{e}W\partial_{\xi}\varphi d\xi d\eta
+C_{0}(\tau),
\end{align}
where $$C_{0}(\tau)=L^{\frac{3}{2}}U_{e}(\tau,0)\int_{0}^{1}\frac{\eta}{\sqrt{w_{1}^{2}+\eta^{2}}}d\eta>0.$$
For the second term on the right hand side of \eqref{3.4}, we use the Young inequality to obtain
\begin{align*}
\int_{\Omega}\eta U_{e}W\partial_{\xi}\varphi d\xi d\eta
\geq& -\int_{\Omega}\eta\left[\frac{U_{e}^{4}}{\partial_{\xi}P}\right]^{\frac{1}{2}}d\xi d\eta
-\frac{1}{2}\int_{\Omega}\eta\frac{\partial_{\xi}P}{U_{e}}W^{3}\varphi d\xi d\eta\\
=& -C_{1}(\tau)
-\frac{1}{2}\int_{\Omega}\eta\frac{\partial_{\xi}P}{U_{e}}W^{3}\varphi d\xi d\eta,\nonumber
\end{align*}
where
\begin{align*}
C_{1}(\tau)=\frac{1}{2}\int_0^L\left[\frac{U_{e}^{4}}{\partial_{\xi}P}\right]^{\frac{1}{2}}d\xi >0.
\end{align*}
Thus, from \eqref{3.4} we have
\begin{align}\label{3.5}
\mathcal{R}_{1}
\geq& \int_{\Omega}\eta \partial_{\xi}U_{e}W\varphi d\xi d\eta
-\frac{1}{2}\int_{\Omega}\eta\frac{\partial_{\xi}P}{U_{e}}W^{3}\varphi d\xi d\eta
+C_{0}(\tau)-C_{1}(\tau).
\end{align}

ii) Recall that $A=(1-\eta^{2})\partial_{\xi}U_{e}+(1-\eta)\frac{\partial_{\tau}U_{e}}{U_{e}}$ and $B=\eta\partial_{\xi}U_{e}+\frac{\partial_{\tau}U_{e}}{U_{e}}$. By using integration by parts we have,
\begin{align}\label{3.6}
\mathcal{R}_{2}
=&\int_{\Omega}\partial_{\eta}AW\varphi d\xi d\eta
-\int_0^L\frac{\partial_{\xi}P}{U_{e}}W(\tau,\xi,0)\varphi d\xi
+\int_{\Omega}BW\varphi d\xi d\eta\\
=&-\int_{\Omega}\eta\partial_{\xi}U_{e}W\varphi d\xi d\eta
-\int_0^L\frac{\partial_{\xi}P}{U_{e}}W(\tau,\xi,0)\varphi d\xi.\nonumber
\end{align}

iii) By using the H\"{o}lder inequality,
\begin{align*}
\int_{\Omega}\eta\frac{\partial_{\xi}P}{U_{e}}W^{3}\varphi d\xi d\eta
\geq 2C_{2}(\tau)\left(\int_{\Omega}W\varphi d\xi d\eta\right)^{3}=2C_{2}(\tau)\mathcal{G}^{3},
\end{align*}
where
\begin{align*}
C_{2}(\tau)=\frac{1}{2}\left(2\int_0^L\left[\frac{U_{e}}{\partial_{\xi}P}\right]^{\frac{1}{2}}\varphi d\xi\right)^{-2}>0.
\end{align*}
Thus we have
\begin{align}\label{3.7}
\mathcal{R}_{3}
\geq \frac{1}{2}\int_{\Omega}\eta\frac{\partial_{\xi}P}{U_{e}}W^{3}\varphi d\xi d\eta
+C_{2}(\tau)\mathcal{G}^{3},
\end{align}

iv) Finally, we use integration by parts and the Young inequality to obtain
\begin{align}\label{3.8}
\mathcal{R}_{4}
=&3\int_{\Omega}\frac{w^{2}}{\mathcal{W}^{\frac{3}{2}}}(\partial_{\eta}w)^{2}\varphi d\xi d\eta
-3\int_{\Omega}\frac{w^{4}}{\mathcal{W}^{\frac{5}{2}}}(\partial_{\eta}w)^{2}\varphi d\xi d\eta\\
&-3\int_{\Omega}\frac{\eta w^{3}}{\mathcal{W}^{\frac{5}{2}}}\partial_{\eta}w\varphi d\xi d\eta
+\int_0^L\frac{\partial_{\xi}P}{U_{e}}W(\tau,\xi,0)\varphi d\xi\nonumber\\
=& 3\int_{\Omega}\frac{\eta^{2}w^{2}}{\mathcal{W}^{\frac{5}{2}}}(\partial_{\eta}w)^{2}\varphi d\xi d\eta
-3\int_{\Omega}\frac{\eta w^{3}}{\mathcal{W}^{\frac{5}{2}}}\partial_{\eta}w\varphi d\xi d\eta+\int_0^L\frac{\partial_{\xi}P}{U_{e}}W(\tau,\xi,0)\varphi d\xi\nonumber\\
\geq& -\frac{3}{4}\int_{\Omega}\frac{w^{4}}{\mathcal{W}^{\frac{5}{2}}}\varphi d\xi d\eta
+\int_0^L\frac{\partial_{\xi}P}{U_{e}}W(\tau,\xi,0)\varphi d\xi\nonumber\\
\geq& -\frac{3}{4}\mathcal{G}+\int_0^L\frac{\partial_{\xi}P}{U_{e}}W(\tau,\xi,0)\varphi d\xi\nonumber.
\end{align}

Combining \eqref{3.5}-\eqref{3.8} with \eqref{3.3} we obtain
\begin{align}\label{3.9}
\frac{d}{d\tau}\mathcal{G}\geq C_{2}(\tau)\mathcal{G}^{3}-\frac{3}{4}\mathcal{G}+C_{0}(\tau)-C_{1}(\tau).
\end{align}

Since for any $(\tau, \xi)\in [0,T)\times [0,L]$, $\partial_{\xi}P$ is bounded below away from zero as well as $U_{e}$, moreover $U_{e}$ is bounded above, there exists positive constants $\lambda_{0}$, $\lambda_{1}$ and $\lambda_{2}$ such that $C_{0}(\tau)\geq\lambda_{0}, C_{1}(\tau)\leq\lambda_{1} , C_{2}(\tau)\geq\lambda_{2}$. Thus, from \eqref{3.9} it follows
\begin{align}\label{3.10}
\frac{d}{d\tau}\mathcal{G}\geq \lambda_{2}\mathcal{G}^{3}-\frac{3}{4}\mathcal{G}+\lambda_{0}-\lambda_{1}.
\end{align}

Therefore, if the initial data of $\mathcal{G}$ is properly large, that is when the assumption \eqref{1.10} holds, $\mathcal{G}(\tau)$ blows up within the time interval $(0,T)$. That means there must exist a point $(\tau^{*},\xi^{*},\eta^{*})\in(0,T)\times [0,L]\times[0,1)$, such that $W(\tau^{*},\xi^{*},\eta^{*})=\infty$. Recall that $W(\tau,\xi,\eta)=\left(w^{2}+\eta^{2}\right)^{-\frac{1}{2}}$, we get $\eta^{*}=0$ and $w(\tau^{*},\xi^{*},0)=0$.
\end{proof}

By combining Proposition 2.1 with Proposition 3.1, we get there is a separation point $(t^*, x^*, 0)$ with $(t^*, x^*)\in (0,T)\times [0, L]$ for the tangential velocity profile $u(t,x,y)$ to the problem \eqref{1.1}.
  Moreover, from the first equation given in  \eqref{1.1} we know that $\partial^2_{y}u=\partial_{x}P>0$ at this separation point, thus this critical point of $u$ in $y$ is non-degenerate.

\section{Appendix: Examples of back flow in boundary layers}

In this section, we shall give two examples on the existence of back flow points of boundary layers, from which we know that the sufficient condition \eqref{1.10} holds when the space interval $[0,L]$ is properly large or when the initial velocity $u_0(x,y)$ grows slowly with respect to $y$ in a large neighborhood of the boundary $\{y=0\}$ for a fixed $L>0$.

\begin{example}

{\rm
Consider the tangential velocity of the outer flow being given
$$U_{e}(t,x)=e^{-L^{5}t}(2L-x), \quad 0\le x\le L.$$
From the Bernoulli law, we have
$$\partial_{x}P(t,x)=L^{5}e^{-L^{5}t}(2L-x)+e^{-2L^{5}t}(2L-x)>0, \quad \forall~0\le x\le L.$$

Choose the initial tangential velocity $u_0(x,y)$ of the problem \eqref{1.1} being monotonic in $y\ge 0$ such that
\begin{equation}\label{5.1}
\int_{0}^{\infty}\frac{\partial_{y}u_{0}}{\sqrt{(\partial_{y}u_{0})^{2}+u_{0}^{2}}}dy\geq c_{0},~\forall~x\in [0,L],
\end{equation}
holds for a fixed constant $c_0>0$.

For the solution of \eqref{1.1} with the above data, let $w(\tau,\xi,\eta)$ be determined by
using the Corocco transformation \eqref{2.1}-\eqref{2.2}.
From the computation given in Section 3,
we know that
$$\mathcal{G}(\tau)=\int_0^1\!\!\int_0^L(w^2(\tau, \xi,\eta)+\eta^2)^{-\frac{1}{2}}(L-\xi)^{\frac{3}{2}}d\xi d\eta$$
satisfies the following inequality
\begin{align*}
\frac{d}{d\tau}\mathcal{G}
\geq\frac{25}{32}\mathcal{G}^{3}-\frac{3}{4}\mathcal{G}-\frac{4\sqrt{2}-1}{5},
\end{align*}
which implies that $\mathcal{G}(\tau)$ will blow up in a finite time when $\mathcal{G}(0)\ge C_0$ for a positive constant $C_0$ independent of $L$.

On the other hand, by using the Crocco transformation and \eqref{5.1} we have
\begin{align*}
\mathcal{G}(0)  =
\int_{0}^{\infty}\!\!\int_0^L\frac{(L-x)^{\frac{3}{2}}\partial_{y}u_{0}}{\sqrt{(\partial_{y}u_{0})^{2}+u_{0}^{2}}}dxdy
\geq c_{0}\int_{0}^{L}(L-x)^{\frac{3}{2}}dx=\frac{2}{5}c_{0}L^{\frac{5}{2}},
\end{align*}
which is large than or equal to $C_0$ when $L$ is properly large. Thus, by using Theorem 1.1
there is a separation point in this case.
}

\end{example}

\begin{remark} In contrast to one well-posedness result given in Oleinik and Samokhin \cite[Theorem 4.2.3]{Oleinik}, in which they obtained the global existence of a classical solution to the problem \eqref{1.1} when $L>0$ is small, the above example shows that for a general monotonic initial datum satisfying \eqref{5.1}, the Prandtl boundary layer usually will flow back in a finite time when the space interval $[0,L]$ is properly large.

\end{remark}

\begin{example}

{\rm Fix $L=1$, and assume that the tangential velocity of the Euler outer  flow on the boundary is the following one,
$$U_{e}(x)=2-x.$$
From the Bernoulli law, we have
$$\partial_{x}P(x)=2-x, \quad \forall 0\le x\le 1.$$
Choose the initial data of the problem \eqref{1.1} being
$u_{0}(x,y)=U(x)\phi(y)$, where $\phi(y)$ satisfies
\begin{equation*}
\begin{cases}
\phi(y)=\alpha y\qquad \mbox{for}~y\leq M,\\
\lim\limits_{y\to\infty}\phi(y)=1,\\
\partial_{y}\phi>0\quad \mbox{for}~y\in[0,\infty),\\
\int_{0}^{\infty}\frac{\phi'(y)}{\phi'(y)+\phi(y)}dy<+\infty,
\end{cases}
\end{equation*}
for some $M\gg 1$ to be determined later, with $0<\alpha<M^{-1}$ being fixed. For the solution of \eqref{1.1} with the above data, let $w(\tau,\xi,\eta)$ be determined by using the Corocco transformation \eqref{2.1}-\eqref{2.2}.
From the computation given in \S3, we know that
$$\mathcal{G}(\tau)=\int_0^1\!\!\int_0^1(w^2(\tau, \xi,\eta)+\eta^2)^{-\frac{1}{2}}(1-\xi)^{\frac{3}{2}}d\xi d\eta$$
satisfies the following inequality
\begin{align*}
\frac{d}{d\tau}\mathcal{G}
\geq\frac{25}{32}\mathcal{G}^{3}-\frac{3}{4}\mathcal{G}-\frac{4\sqrt{2}-1}{5}
\end{align*}
which implies that $\mathcal{G}(\tau)$ will blow up in a finite time when $\mathcal{G}(0)\ge C_0$ for a positive constant $C_0$.
On the other hand, it is easy to have
\begin{align*}
\mathcal{G}(0)=\int_{0}^{\infty}\!\!\int_0^1\frac{(1-x)^{\frac{3}{2}}\partial_{y}u_{0}}{\sqrt{(\partial_{y}u_{0})^{2}+u_{0}^{2}}}dxdy
\geq \frac{2}{5}\int_{0}^{M}\frac{1}{\sqrt{1+y^2}}dy
\end{align*}
which is large than or equal to $C_0$ when $M$ is properly large. Thus, by using Theorem 1.1
there exists separation in this case.

}

\end{example}

\begin{remark} This example shows that for a fixed flow distances $L$ in the streamwise direction, the boundary layer shall flow back when the initial velocity $u_{0}(x,y)$ grows slowly with respect to $y$ in a large neighborhood of the boundary $\{y=0\}$, and separation occurs earlier as the neighborhood larger.

\end{remark}

\noindent{\bf Acknowledgments:} The authors would like to express their gratitude to Weinan E and Fanghua Lin for their valuable discussion on this topic, and to Tong Yang for the suggestion of introducing Example 5.1 in \S5.   This research was partially supported by
National Natural Science Foundation of China (NNSFC) under Grant No. 11631008, and by Shanghai Committee of Science and Technology under Grant No. 15XD1502300.

\end{document}